\documentclass{article}
\usepackage[utf8]{inputenc}
\usepackage[left=1in,top=1in,right=1in,bottom=1in]{geometry}
\usepackage[linktocpage=true]{hyperref}
\usepackage{setspace}
\usepackage{amssymb, amsmath, amsthm, graphicx,mathrsfs}
\usepackage{caption,cite}
\usepackage{mathtools,color}

\usepackage{cleveref}
\usepackage{tikz,verbatim}
\usetikzlibrary{shapes,snakes}
\usetikzlibrary{arrows}

\newtheorem{thm}{Theorem}[section]

\newtheorem{lem}[thm]{Lemma}
\newtheorem{conj}[thm]{Conjecture}

\newtheorem{prob}[thm]{Problem}

\newcommand{\C}{\mathcal{C}}

\renewcommand{\P}{\mathcal{P}}
\renewcommand{\L}{\mathcal{L}}
\renewcommand{\l}{\left}
\renewcommand{\r}{\right}

\title{Block Designs with Large Transversal–Block Intersections}
\author{Jiaxi Nie\footnote{School of Mathematics, Georgia Institute of Technology,
Atlanta, GA 30332 USA. {\tt jnie47@gatech.edu}.}}
\date{\today}

\begin{document}

\maketitle

\begin{abstract}
A block design is a set family, whose members are called blocks, where every pair of points in the universe is covered by exactly one block. Erd\H{o}s asked the following question: is it true that if the universe is of size $n$ and each block is of size $\Omega(\sqrt{n})$, then there exists a transversal intersecting every block in $O(1)$ points? We provide a negative answer to this question. Indeed, for arbitrarily large $n$, we construct such a block design where every transversal intersects some block in $\Omega(\frac{\log n}{\log\log n})$ points.
\end{abstract}

\section{Introduction}
Let $S$ be a finite set and $A_1,\dots, A_m$ its subsets called blocks. We say $(S;A_1,\dots, A_m)$ is a \emph{pairwise balanced block design} if every pair of distinct elements of $X$ is contained in exactly one block. In this paper, we simply call them block designs for brevity. A \emph{transversal} of the block design is a subset of $S$ with non-empty intersection with every $A_i$.  Erd\H{o}s~\cite{erdHos1981combinatorial} asked the following problem.

\begin{prob}\label{prob:main}
Is it true that to every $c_1$ there is a $c_2$ so that if $(S;A_1,\dots,A_m)$ balanced block design satisfying $|S|=n$ and $|A_i|>c_1n^{1/2}$ for every $i$ then there is a transversal $Q$ such that
$$
|Q\cap A_i|\le c_2, ~~~~i=1,\dots,m.
$$
\end{prob}

The stronger assertion for partial designs was disproved by Alon~\cite{alon2026problems}. In this paper, we give a negative answer to Problem~\ref{prob:main}.
\begin{thm}\label{thm:main}
For any integer $n_0$ there exists $n\ge n_0$, and a block design $(S; A_1,\dots, A_m)$ such that $|S|=n$, $|A_i|>0.6\sqrt{n}$ for every $i$ and
for every transversal $Q$ there exists an $i$ such that
$$
|Q\cap A_i|\ge \frac{\log n}{100\log\log n}.
$$
\end{thm}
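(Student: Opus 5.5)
The plan is to build the design from a projective plane, and to reserve a small part of the structure as a "gadget" on which any transversal is forced to pile up points. First I note why some modification is needed. In $PG(2,q)$ with $n=q^2+q+1$, every line has $q+1>0.6\sqrt n$ points. However, a random set of about $Cq\log q$ points already meets every line, with maximum intersection of order $\log q/\log\log q$. So $\log n/\log\log n$ is exactly the scale at which randomness alone gives no protection, and the construction has to defeat every cleverer transversal, including Baer-type or other structured blocking sets. I would therefore take $q=p^{r}$ with $r\approx \log n/\log\log n$ levels available, and work with the chain of subfields $\mathbb F_p\subset\mathbb F_{p^{2}}\subset\cdots$, or an analogous chain of nested subplanes.

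The construction I would aim for is recursive, with depth $k\approx \log n/(100\log\log n)$. The design $D_j$ is obtained from $D_{j-1}$ by a blow-up with an affine plane $AG(2,s)$. Each point of a distinguished "core" is replaced by a parallel class, and each core block is padded with whole affine lines, so that every pair of points is still covered exactly once. At each stage I would check three things: that pairs stay covered exactly once, that all block sizes remain above $0.6\sqrt{n}$, and that $n$ grows only by a factor of roughly $s^2$ per level. Taking $s$ polylogarithmic keeps the total size $n = s^{O(k)}$, which gives $k\gtrsim \log n/\log\log n$ with room to spare for the constant $1/100$.

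The forcing argument is an induction on levels, with the hypothesis that every transversal $Q$ of $D_j$ meets some block in at least $j$ points. At level $j$, fix a point $x\notin Q$. The blocks through $x$ partition the remaining points, and each of them meets $Q$. Since there are about $\sqrt n$ such blocks and $|Q|\le c\sqrt n$ whenever all intersections are at most $c$, a pigeonhole count shows that most blocks through $x$ meet $Q$ in few points. Passing to the sub-design induced on one parallel class, I would then exhibit a transversal of $D_{j-1}$ whose heavy block lifts to a block of $D_j$ containing one extra point of $Q$. The lift is where the padding structure is used: the padding lines of the blown-up core block are themselves blocked by $Q$.

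I expect the main obstacle to be this lifting step. Concretely, one must show that a transversal of the blown-up design cannot spread its points so that it avoids gaining an extra point at every level. I would first attempt it with a double-counting argument over the $s$ parallel copies, showing that some copy inherits a transversal of $D_{j-1}$ in which the previously heavy block also picks up a point from the padding. If this fails, the fallback is to choose $s$ larger and use a Ramsey- or pigeonhole-type selection among copies. That fallback costs a $\log$ factor in the depth, which is exactly the $\log\log n$ loss in the bound of Theorem~\ref{thm:main}.
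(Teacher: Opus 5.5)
Your proposal has a genuine gap. Neither the construction nor the inductive step is actually supplied, and the construction as sketched cannot work.

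On the natural reading, a core point $x$ becomes the $s^2$ points of a copy of $AG(2,s)$, and each core block through $x$ is padded with one affine line of that copy. Then the padded block through core points $x\neq y$ covers only $s^2$ of the $s^4$ pairs between the two copies, so most pairs are never covered. Fixing this in the standard way, with a transversal design on the groups, yields blocks of the old size $|B|$. That is only about $\sqrt{n}/s$ for the new $n$, which violates $|A_i|>0.6\sqrt n$.

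This rigidity is forced by the hypothesis. If all blocks exceed $c\sqrt n$, each point lies on fewer than about $\sqrt n/c$ blocks, since they partition the other $n-1$ points. A block $B\not\ni x$ meets $|B|$ distinct blocks through $x$, so every block also has size below about $\sqrt n/c$. Designs satisfying the theorem are therefore projective-plane-like, and you give no recursive mechanism that produces such objects with exact pair coverage.

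Even granting a design, the induction ``every transversal of $D_j$ meets some block in at least $j$ points'' has no engine. Nothing forces a transversal of $D_j$ to restrict to a transversal of $D_{j-1}$ on some copy, or forces the heavy block to gain a point from the padding, and you acknowledge this yourself. That step is the entire difficulty, not a technicality. A deterministic, highly structured construction is exactly where one must rule out cleverly structured transversals. The pigeonhole remark about blocks through a point $x\notin Q$ gives no leverage toward that.

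The idea you are missing is to randomize and then count. The paper takes a uniformly random half $R$ of the points of $PG(2,q)$. Then $(R;\{R\cap L\})$ is automatically a design with $|R|\approx q^2/2$ and all blocks of size about $q/2>0.6\sqrt{|R|}$. Applying the Balogh--Samotij container lemma iteratively to the $k$-uniform hypergraph of $k$-subsets of $\gamma$-point pieces of lines shows the following. Every $k$-arc (a set with fewer than $k$ points on each line) lies in one of at most $\exp\bigl(10^{12}k^{11}(\log q)^2q^{1-1/k}\bigr)$ containers, each of size at most $2kq$. For a fixed container $C$, at least $q^2/2$ lines meet $C$ in fewer than $4k$ points. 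Janson's inequality then gives $\Pr[C\cap R \text{ is a transversal}]\le e^{-q/(50k)}$. With $k\approx \log q/(40\log\log q)$ one has $q^{1/k}\approx(\log q)^{40}$, so the union bound over containers succeeds. This comparison of $q^{1-1/k}\mathrm{poly}(k)$ against $q/k$ is the true source of the $\log\log n$ loss, not a depth count.

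Relatedly, your opening remark is incorrect. A random set of $Cq\log q$ points has average line intersection about $C\log q$, so its maximum line intersection is $\Theta(\log q)$, not $\Theta(\log q/\log\log q)$.
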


\section{Preliminary}
A key component of our proof is the hypergraph container lemma developed by Balogh-Morris-Samotij~\cite{balogh2015independent} and Saxton-Thomason~\cite{saxton2015hypergraph} independently. In particular, we make use for the following efficient container lemma by Balogh and Samotij~\cite{balogh2019efficient}. For any hypergraph $H$, we denote its number of vertices and edges by $v(H)$ and $e(H)$ respectively. For any $X\subset V(H)$, let $d_H(X)$ be the number of edges of $H$ containing $X$. For any integer $t$, let $\Delta_t(H)$ be the maximum $d_H(X)$ where $|X|=t$.

\begin{thm}[A simplified version of Theorem 1.1 in~\cite{balogh2019efficient}]\label{thm:HCL}
Let $k$ be a positive integer and $H$ be a nonempty $k$-uniform hypergraph on $n$ vertices. Suppose $\tau\in (0,1)$ and $\eta>0$ are such that
\begin{equation}\label{eq:HCL1}
\tau n\ge 10^8 k^6\eta  
\end{equation}
and
\begin{equation}\label{eq:HCL2}
\Delta_t(H)\le \eta\l(\frac{\tau}{10^6k^5}\r)^{t-1}\frac{e(H)}{n},~~~t=1,\dots, k.    
\end{equation}
Then there exists a family $\C$ such that
\begin{itemize}
    \item[(a)] for every independent set $I$ of $H$, there exists $C\in\C$ such that $I\subset C$;
    \item[(b)] $|\C|\le \exp(\log(\frac{e}{\tau})\tau n)$;
    \item[(c)] for every $C\in \C$, $|C|\le (1-\delta+\tau)n$ where $\delta=\frac{1}{10^3k^4\eta}$.
\end{itemize}
\end{thm}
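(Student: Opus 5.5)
The plan is to derive Theorem~\ref{thm:HCL} directly from the statement of Theorem~1.1 in~\cite{balogh2019efficient}, after a change of parameters, rather than re-proving a container lemma. I recall that theorem in the following form. Let $H$ be $k$-uniform, let $K>0$ and $\tau'\in(0,1)$, and suppose
\[
\Delta_t(H)\le K\,(\tau')^{t-1}\,e(H)/v(H)\quad\text{for }t=1,\dots,k .
\]
Then there exist a family $\mathcal{S}$ of subsets of $V(H)$, each of size at most $c_k\tau' v(H)$, and a map $f$ with the following properties: for every independent set $I$ there is $S\in\mathcal{S}$ with $S\subset I\subset S\cup f(S)$, and $|f(S)|\le (1-\delta_0)v(H)$ with $\delta_0$ of order $k^{-O(1)}K^{-1}$. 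The explicit constants in their paper are polynomial in $k$; that is exactly what the factors $10^6k^5$, $10^8k^6$ and $10^3k^4$ in \eqref{eq:HCL1}, \eqref{eq:HCL2} and in $\delta$ are built to absorb.

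The first step is to match hypotheses. I would set $K=\eta$ and $\tau'=\tau/(10^6k^5)$. Then \eqref{eq:HCL2} is precisely the codegree condition of~\cite{balogh2019efficient} with these parameters. If their theorem carries a lower-bound requirement on $\tau' v(H)$, or needs $K$ not too small relative to $k$ (so that $\delta_0\le 1$ and the fingerprint size is at least $1$), then \eqref{eq:HCL1} should supply it: it says $\tau' n\ge 100k\,\eta$.

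With this choice the fingerprint size $c_k\tau' n$ is at most $\tau n$, provided $c_k\le 10^6k^5$. I expect this check against the explicit constants in their Theorem~1.1 to be the only step needing real care.

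Next I would define the containers. Put $\mathcal{C}=\{S\cup f(S): S\in\mathcal{S}\}$. Property (a) is then immediate from $S\subset I\subset S\cup f(S)$.

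For property (c), each container satisfies
\[
|S\cup f(S)|\le |S|+|f(S)|\le \tau n+(1-\delta_0)n .
\]
We then need $\delta_0\ge \delta=1/(10^3k^4\eta)$, which again is a comparison with their stated constant. If their $\delta_0$ is slightly worse, I would instead feed in a smaller $K$, using a weaker codegree bound, and accept the loss in the constants.

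For property (b), $\mathcal{C}$ is indexed by $\mathcal{S}$, and every member of $\mathcal{S}$ is a subset of size at most $\tau n$ of an $n$-set. Hence
\[
|\mathcal{C}|\le\sum_{s\le\tau n}\binom{n}{s}\le\left(\frac{e}{\tau}\right)^{\tau n}=\exp\!\left(\log\!\left(\frac{e}{\tau}\right)\tau n\right),
\]
using the standard estimate $\sum_{s\le m}\binom{n}{s}\le(en/m)^m$ for $m\le n$. If $\tau n<1$, then \eqref{eq:HCL1} fails for $\eta>0$, so that case does not arise.

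The main obstacle is purely bookkeeping: confirming that the constants in Balogh--Samotij's Theorem~1.1 are dominated by the generous powers $10^6k^5$, $10^8k^6$ and $10^3k^4$ chosen here. The structural content comes entirely from the cited theorem.
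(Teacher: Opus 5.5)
Your route is the paper's: it gives no proof and quotes this as a specialization of Balogh--Samotij's Theorem~1.1. Your reconstruction (containers $S\cup f(S)$, fingerprints counted via $\sum_{s\le\tau n}\binom{n}{s}\le(e/\tau)^{\tau n}$) matches what the $+\tau$ in (c) and the bound in (b) indicate, and like the paper you leave the constant comparison to the source.

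Two small slips. First, \eqref{eq:HCL1} alone cannot rule out $\tau n<1$, since $\eta$ could be tiny. Instead use \eqref{eq:HCL2} at $t=1$: since $\Delta_1(H)\ge k\,e(H)/n$, it forces $\eta\ge k$. The case $\tau n<1$ is harmless anyway, because then $|\C|\le 1$. Second, your fallback of ``feeding in a smaller $K$'' is backwards: a smaller $K$ requires a stronger codegree bound than \eqref{eq:HCL2} provides, so the fixed constants really must be read off from the cited theorem.
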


Another important tool is the Janson inequality~\cite{janson1988exponential}. In particual we use the following version as presented in Alon-Spencer~\cite{AlonSpencer2016}. 
\begin{thm}[Theorem 8.1.1 and Theorem 8.1.2 in~\cite{AlonSpencer2016}]\label{thm:janson}
Let $\Omega$ be a finite universal set, $Y$ a random subset of $\Omega$ given by $\Pr[r\in R]=p_r$, these events being mutually independent over $r\in \Omega$. Let $\{A_i\}_{i\in I}$ be a finite collection of subsets of $\Omega$. Let $B_i$ be the event $A_i\subseteq Y$. For $i,j\in I$, write $i\sim j$ if $i\not= j$ and $A_i\cap A_j\not=\emptyset$. Set
$$
\mu=\sum_{i\in I}\Pr[B_i]~~~~\text{and}~~~~\Delta=\sum_{i\sim j}\Pr[B_i\wedge B_j]
$$
where the sum for $\Delta$ is over ordered pairs. Then
\begin{equation}\label{eq:Janson1}
\Pr\l[\bigwedge_{i\in I} \overline{B_i}\r]\le \exp(-\mu+\frac{\Delta}{2}).    
\end{equation}
Further, if $\Delta\ge \mu$, then 
\begin{equation}\label{eq:Janson2}
\Pr\l[\bigwedge_{i\in I} \overline{B_i}\r]\le \exp\l(-\frac{\mu^2}{2\Delta}\r).    
\end{equation}
\end{thm}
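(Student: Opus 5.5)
We prove the two bounds in turn: \eqref{eq:Janson1} by a sequential conditioning argument combined with the Harris (FKG) inequality for product measures, and \eqref{eq:Janson2} from \eqref{eq:Janson1} applied to a random subfamily of the events. Order the index set as $I=\{1,\dots,m\}$. The chain rule gives
$$
\Pr\Bigl[\bigwedge_{i}\overline{B_i}\Bigr]=\prod_{i=1}^m\Bigl(1-\Pr\Bigl[B_i\,\Big|\,\bigwedge_{j<i}\overline{B_j}\Bigr]\Bigr).
$$
We may assume each conditioning event has positive probability; otherwise the left-hand side is $0$ and there is nothing to prove. The plan is to show
$$
\Pr\Bigl[B_i\,\Big|\,\bigwedge_{j<i}\overline{B_j}\Bigr]\ge \Pr[B_i]-\sum_{j<i,\,j\sim i}\Pr[B_i\wedge B_j].
$$
Then $1-x\le e^{-x}$ gives a bound of $\exp(-\mu+\sum_i\sum_{j<i,j\sim i}\Pr[B_i\wedge B_j])$. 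The double sum counts each unordered pair of intersecting indices exactly once, so it equals $\Delta/2$, which proves \eqref{eq:Janson1}.

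For the conditional lower bound, split the earlier indices. Let
$$
D=\bigwedge_{j<i,\,j\sim i}\overline{B_j},\qquad E=\bigwedge_{j<i,\,j\not\sim i}\overline{B_j}.
$$
Since $\Pr[D\wedge E]\le\Pr[E]$, we have $\Pr[B_i\mid D\wedge E]\ge \Pr[B_i\wedge D\mid E]=\Pr[B_i\mid E]\,\Pr[D\mid B_i\wedge E]$. The event $E$ depends only on coordinates outside $A_i$, so it is independent of $B_i$, and $\Pr[B_i\mid E]=\Pr[B_i]$. A union bound gives $\Pr[D\mid B_i\wedge E]\ge 1-\sum_{j<i,\,j\sim i}\Pr[B_j\mid B_i\wedge E]$.

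The key step is $\Pr[B_j\mid B_i\wedge E]\le\Pr[B_j\mid B_i]$. Conditioning on $B_i$ fixes the coordinates in $A_i$ and leaves a product measure on the remaining coordinates. Under that measure $B_j$ is increasing and $E$ is decreasing, so the Harris inequality applies. Multiplying back by $\Pr[B_i]$ gives the claimed bound, and hence \eqref{eq:Janson1}.

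For \eqref{eq:Janson2}, assume $\Delta\ge\mu$ and set $q=\mu/\Delta\in(0,1]$. Choose a random subfamily $I'\subseteq I$ by keeping each index independently with probability $q$. Then $\mathbb E[\mu_{I'}]=q\mu$, and $\mathbb E[\Delta_{I'}]=q^2\Delta$, since a pair counts only if both indices are kept. Hence some $I'$ satisfies
$$
-\mu_{I'}+\tfrac12\Delta_{I'}\le -q\mu+\tfrac{q^2}{2}\Delta=-\frac{\mu^2}{2\Delta}.
$$
Since $\bigwedge_{i\in I}\overline{B_i}\subseteq\bigwedge_{i\in I'}\overline{B_i}$, applying \eqref{eq:Janson1} to $I'$ finishes the proof.

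The main obstacle is the correlation step, because it needs care about which coordinates are fixed when we condition on $B_i$. Once that is handled, everything else is bookkeeping.
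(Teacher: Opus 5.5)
Your proof is correct, and it is essentially the standard argument in Alon--Spencer (Theorems 8.1.1 and 8.1.2) that the paper cites as a black box without reproducing. For the first bound you use the chain rule, the split into intersecting and disjoint earlier events, and the Harris inequality to get $\Pr[B_i\mid\bigwedge_{j<i}\overline{B_j}]\ge\Pr[B_i]-\sum_{j<i,\,j\sim i}\Pr[B_i\wedge B_j]$; for the extended bound you pass to a random subfamily with retention probability $\mu/\Delta$, which is exactly how the cited source proves both parts.
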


\section{Proof of the main result}
In the rest of the paper, we stick to the following assumptions and notations: Let $q$ be a sufficiently large prime power. We use $\P$ to denote the set of all points of $PG(2,q)$ and use $\L$ to denote the set of all lines of $PG(2,q)$. Let $R$ be a random subset of $\P$ obtained by picking every point randomly and independently with probability 1/2.

We first outline the main idea. Note that $R$ together with the blocks $\{L\cap R\}_{L\in\L}$ form a block design. By the Chernoff bound and the union bound, with high probability every $L\in\L$ has substantial intersection with $R$, and hence every transversal of $R$ is also a transversal of $PG(2,q)$. Let $k=\Theta(\frac{\log q}{\log\log q})$. We say a set $X\subseteq \P$ is an \emph{$k$-arc} if $|X\cap L|< k$ for every $L\in\L$. 

The first step of our proof is to construct a collection of containers for $k$-arcs, i.e. a collection $\C$ of subsets of $\P$ such that for every $k$-arc there exists a $C\in\C$ such that $X\subseteq C$. We make sure that both the number of containers and the size of each container are ``small''.

The second step is to show that, for every container $C$, the probability that $R\cap C$ is a transversal of $PG(2,q)$ is so ``tiny'' that we may confirm, by a union bound, that there exists a deterministic $R$ such that $R\cap C$ is not a transversal for every container $C\in\C$. Therefore, every $k$-arc in $R$ is contained in some $C\cap R$ which is not a transversal, showing that every $k$-arc is not a transversal, which completes the proof.

\subsection{Containers for $k$-arcs}

We will construct a rooted tree where the root is $\P$ and every node in the tree is a subset of $\P$ such that for every non-leaf node $X$, every $k$-arc in $X$ is contained in some child node of $X$. Thus, every $k$-arc of $PG(2,q)$ is contained in some leaf node of the tree. The leaf nodes are our containers.

To control the number of leaf nodes and the size of each leaf node, it suffices to control for a given non-leaf node the number of its children and the size of each of its child. The following is the key lemma.

\begin{lem}\label{lem:k-arc-one-setp-HCL}
Let $\gamma$ and $k$ be positive integers such that $2\le k\le \min\{\gamma,\frac{\log q}{20\log\log q}\}$. Let $X\subseteq \P$ be such that $2\gamma q\le |X|\le (\gamma+1)2q$. Then there exists a collection $\C_X$ of subsets of $X$ such that
\begin{itemize}
    \item[(a)] for every $k$-arc $Y$ in $X$ there exists $C\in\C_X$ such that $Y\subseteq C$;
    \item[(b)] $|\C_x|\le \exp\l(10^{7}k^{6}\log q \cdot q^{1-\frac{1}{k}}\r)$;
    \item [(c)] For every $C\in\C_x$, $|C|\le (1-\frac{1}{10^5k^5})|X|$.
\end{itemize}
\end{lem}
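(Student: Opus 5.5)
The plan is to apply Theorem~\ref{thm:HCL} to a carefully thinned hypergraph whose independent sets include all $k$-arcs in $X$. Write $N=|X|$, so $2\gamma q\le N\le 2(\gamma+1)q$.

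\textbf{Construction of $H$.} Set $a=\lceil \gamma/2\rceil\ge k/2$; if that is too small relative to $k$, I will adjust the constants, for instance taking $a=\gamma$. Let $\L_X$ be the set of lines $L$ with $|L\cap X|\ge a$. For each $L\in\L_X$ fix a subset $S_L\subseteq L\cap X$ with $|S_L|=a$. Let $H$ be the $k$-uniform hypergraph on $X$ whose edges are all $k$-subsets of the sets $S_L$, $L\in\L_X$. Since two points lie on a unique line, distinct lines give disjoint edge sets. Every $k$-arc $Y\subseteq X$ meets each $S_L$ in fewer than $k$ points, so $Y$ is independent in $H$. Hence containers for $H$ give (a).

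\textbf{Counting.} Incidences satisfy $\sum_L|L\cap X|=N(q+1)\ge 2\gamma q^2$. Lines with fewer than $a$ points of $X$ carry at most $a(q^2+q+1)\approx \gamma q^2/2$ of these incidences. For the very heavy lines I use the pair count $\sum_L |L\cap X|(|L\cap X|-1)=N(N-1)$. This shows that lines with more than $8(\gamma+1)$ points carry at most about $N^2/(8\gamma)\le \gamma q^2/2+O(q^2)$ incidences. The remaining lines, those with $a\le |L\cap X|\le 8(\gamma+1)$, carry at least about $\gamma q^2$ incidences, so there are $\Omega(q^2)$ of them. Consequently
\[
e(H)\ge c\,q^2\binom{a}{k}
\]
for an absolute constant $c$.

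For the codegrees, a vertex lies on at most $q+1$ lines, so $\Delta_1\le (q+1)\binom{a-1}{k-1}$. For $t\ge 2$ a $t$-set lies in at most one $S_L$, so $\Delta_t\le\binom{a-t}{k-t}$.

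\textbf{Parameters.} I will take $\eta$ to be a constant times $k$, say $\eta=100k$, which gives $\delta\ge 1/(10^5k^5)$ after absorbing constants. I will take
\[
\tau=\frac{C k^5 q^{-1/k}}{\gamma}
\]
for a suitable constant $C$. The key checks are as follows.
\begin{itemize}
\item $t=1$: $\Delta_1 N/e(H)\lesssim (q+1)\cdot\frac{k}{a}\cdot\frac{(\gamma+1)q}{q^2}=O(k)\le\eta$.
\item $t\ge 2$: $\Delta_t N/e(H)\lesssim \frac{\gamma}{q}\,(k/a)^{t}\approx \frac{\gamma}{q}(2k/\gamma)^t$, and this must be at most $\eta\,(\tau/10^6k^5)^{t-1}\approx \eta\,(C'q^{-1/k}/\gamma)^{t-1}$. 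The powers of $\gamma$ cancel except for a factor $(2k)^t\gamma^{-1}$, which is favorable. What remains is $q^{-1}(2k)^t\lesssim q^{-(t-1)/k}(C')^{t-1}$. This holds for all $t\le k$ because $q^{-1+(t-1)/k}\le q^{-1/k}$ and $q^{1/k}\ge(\log q)^{20}\gg (2k)^k$ when $k\le \log q/(20\log\log q)$.
\item Condition \eqref{eq:HCL1}: $\tau N\approx Ck^5q^{1-1/k}\ge 10^8k^6\eta$, which holds for large $q$.
\end{itemize}
Then (b) follows from $\log(e/\tau)\tau N\le 2\log q\cdot O(k^5)q^{1-1/k}$, which is within $10^7k^6\log q\,q^{1-1/k}$. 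For (c), $|C|\le(1-\delta+\tau)N$ with $\tau\ll\delta$, which gives the stated shrinkage.

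\textbf{Main obstacle.} The delicate step is the lower bound on $e(H)$ when $X$ is arbitrary. If $X$ contains whole lines, the raw hypergraph of all collinear $k$-sets has wildly nonuniform degrees, and the counting is where that must be controlled. My fix is twofold. Truncating every line to exactly $a$ points equalizes the edge count per line. The pair-counting argument then guarantees $\Omega(q^2)$ lines of moderate size. I expect most of the care to go into tracking the constants there, and into the $t$-range check near $t=k$, where the restriction $k\le \log q/(20\log\log q)$ is exactly what is needed.
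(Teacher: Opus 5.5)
\textbf{The gap: $\tau$ is too small by a factor of $k$.} The step that fails is the verification of \eqref{eq:HCL2} for $t\ge 2$. With $\tau=Ck^5q^{-1/k}/\gamma$, the powers of $\gamma$ on the two sides cancel completely. What you then need is roughly $(2k)^t q^{-1}\lesssim \eta\,(C')^{t-1}q^{-(t-1)/k}$, and you justify it by claiming $q^{1/k}\ge(\log q)^{20}\gg(2k)^k$. That claim is false in the relevant range. Indeed $(2k)^k=e^{k\log(2k)}$, which for $k\approx \log q/(20\log\log q)$ is $q^{1/20-o(1)}$, whereas $q^{1/k}$ is only $(\log q)^{20}$. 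The inequality $(2k)^k\le q^{1/k}$ holds only when $k^2\log k\lesssim \log q$.

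This is not a lossy estimate that a better bound could rescue. At $t=k$ you have $\Delta_k(H)=1$, $e(H)\le (q^2+q+1)\binom{a}{k}$ and $N\ge 2\gamma q$. So \eqref{eq:HCL2} with your $\tau$ forces roughly $k!\lesssim \eta\,(C')^{k-1}q^{1/k}$, where $C'=C/10^6$ is an absolute constant. This fails once $k$ is of order $\log q/\log\log q$, which is exactly the regime $k\approx \log q/(40\log\log q)$ used in Theorem~\ref{thm:main}. As written, your argument only covers $k\lesssim\sqrt{\log q/\log\log q}$, which would give only a $\sqrt{\log n/\log\log n}$ bound.

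\textbf{The fix.} Put $k^6$ rather than $k^5$ into $\tau$, for instance $\tau=10^6k^6/(\gamma q^{1/k})$ as in the paper. Then $(\tau/10^6k^5)^{t-1}=(k/(\gamma q^{1/k}))^{t-1}$ carries a factor $k^{t-1}$. That factor matches $\prod_{i=1}^{t-1}\frac{k-i}{\gamma-i}\le (k/\gamma)^{t-1}$ on the other side. All that remains is $q^{-1}\le q^{-(t-1)/k}$, which is trivial for $t\le k$. This costs one extra factor of $k$ in (b), which is precisely why the statement has $k^6$ there. The condition $\tau\ll\delta$ still holds, because $q^{1/k}\ge(\log q)^{20}$ dominates $10^{11}k^{11}$.

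\textbf{Two smaller points.}
First, $a=\lceil\gamma/2\rceil$ makes $H$ empty whenever $k>\lceil\gamma/2\rceil$, and $k=\gamma$ is allowed, so you do need $a=\gamma$. Your pair count still works then. Lines with between $\gamma$ and $8(\gamma+1)$ points of $X$ carry at least $(\gamma+1)q^2/2$ incidences, so there are at least $q^2/16$ of them.
Second, $\eta=100k$ gives $\delta=10^{-5}k^{-5}$ exactly, so $1-\delta+\tau$ slightly exceeds the bound in (c). Your $t=1$ count allows $\eta=50k$, which repairs this.

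\textbf{Comparison with the paper.} Your one-block-per-line hypergraph with a second-moment (pair) count is a valid alternative to the paper's construction. The paper instead splits each $L\cap X$ into as many disjoint $\gamma$-blocks as possible. Heavy lines then contribute proportionally, so $e(H')\ge q^2\binom{\gamma}{k}$ follows from incidence counting alone, with no control of heavy lines and no change to $\Delta_1$.
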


\begin{proof}
We construct a $k$-uniform hypergraph like this: for each line $L\in\L$, we iteratively extract a subset of size $\gamma$ from $L\cap X$, take all of its size-k subsets as edges, and stop when the remaining vertices in $L\cap X$ is less than $\gamma$. More precisely, for each line $L\in\L$, partition $X\cap L$ into disjoint subsets $S_L^{(1)},S_L^{(2)},\dots, S_L^{(s_L)}$ and $R_L$ arbitrarily such that $|S_L^{(1)}|=\dots=|S_L^{(s_L)}|=\gamma$ and $|R_L|<\gamma$. Let $H'$ be a k-uniform hypergraph with $V(H')=X$ and
$$
E(H')=\bigcup_{L\in\L}\bigcup_{i=1}^{s_L}\binom{S^{(i)}_L}{k}
$$
where $\binom{S^{(i)}_L}{k}$ means the collection of all size-$k$ subsets of $S^{(i)}_L$. 

Consider the point-line incidences $(x,L)$ where $x\in X$, $L\in\L$ and $x$ lies on $L$. Since each point is contained in $q+1$ lines, the number of incidences equals $|X|(q+1)$. Since $|R_L|<\gamma$ for every $L$, the number of $(x,L)$ where $x\in R_L$ is at most $\gamma|\L|=\gamma(q_2+q+1)$. Thus,
$$
e(H')\ge \frac{|X|(q+1)-(q^2+q+1)\gamma}{\gamma}\binom{\gamma}{k}\ge q^2\binom{\gamma}{k}.
$$
For every point $x\in X$, $x$ is contained in $q+1$ line and hence contained at most $q+1$ small blocks $S_L^{(i)}$, which implies
$$
\Delta_1(H')\le (q+1)\binom{\gamma-1}{k-1}.
$$
For every $2\le t\le k$ and every size-$t$ subset of $X$ is contained in at most one $S_L^{(i)}$, and hence
$$
\Delta_{t}(H')\le \binom{\gamma-t}{k-t}.
$$
We want to apply Theorem~\ref{thm:HCL} on $H'$ with parameters $\eta=10k$ and $\tau=\frac{10^6k^6}{\gamma q^{\frac{1}{k}}}$. To this end, we need to check the required conditions \eqref{eq:HCL1} and \eqref{eq:HCL2}. Note that $\tau |X|\ge 2\cdot 10^5k^6 q^{1-\frac{1}{k}}\ge 2\cdot 10^8 k^7$ given that $q$ is sufficiently large, thus \eqref{eq:HCL1} holds. Next,
$$
\frac{\Delta_1(H')|X|}{e(H')}\le \frac{(q+1)\binom{\gamma-1}{k-1}(\gamma+1)2q}{q^2\binom{\gamma}{k}}\le 10k=\eta,
$$
and for $2\le t\le k$,
$$
\frac{\Delta_t(H')|X|}{e(H')\eta}\le \frac{\binom{\gamma-t}{k-t}(\gamma+1)2q}{q^2\binom{\gamma}{k}10k}\le \frac{(k-1)(k-2)\dots(k-t+1)}{(\gamma-1)(\gamma-2)\dots(\gamma-t+1)q}\le \l(\frac{k}{\gamma q^{\frac{1}{k}}}\r)^{t-1}=\l(\frac{\tau}{10^6k^5}\r)^{t-1}.
$$
Thus \eqref{eq:HCL2} holds. By Theorem~\ref{thm:HCL}, there exists a collection $\C_X$ of subsets of $X$ such that
\begin{itemize}
    \item[($a'$)] for every independent set $I$ of $H'$ there exists $C\in\C_X$ such that $I\subseteq C$;
    \item[($b$)] $|\C_X|\le \exp\l(\log(\frac{e}{\tau})\tau |X|\r)\le \exp\l(10^7k^6\log q \cdot q^{1-\frac{1}{k}}\r)$;
    \item[($c$)] for every $C\in\C_X$, $|C|\le (1-\frac{1}{10^4k^5}+\tau)|X|\le (1-\frac{1}{10^5k^5})|X|$,
\end{itemize}
where the last inequality in $(c)$ is from the given condition $k\le \frac{\log q}{20\log\log q}$. Note that every edge of $H'$ is a set of $k$ points on the same line, so a $k$-arc in $X$ must be an independent set of $H'$. Hence $(a')$ implies $(a)$.
\end{proof}

\begin{lem}[Container Lemma for $k$-arc]\label{lem:k-arc-HCL}
Let $k$ be a positive integer such that $2\le k\le \frac{\log q}{20\log\log q}$. There exists a collection $\C$ of subsets of $\P$ such that
\begin{itemize}
    \item[(a)] for every $k$-arc $Y$ of $PG(2,q)$ there exists $C\in\C$ such that $Y\subseteq C$;
    \item[(b)] $|\C|\le \exp\l(10^{12}k^{11}(\log q)^2q^{1-\frac{1}{k}}\r)$;
    \item [(c)] for every $C\in\C$, $|C|\le 2kq$.
\end{itemize}
\end{lem}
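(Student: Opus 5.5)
We build the container tree described at the start of this subsection, applying Lemma~\ref{lem:k-arc-one-setp-HCL} repeatedly. The root is $\P$, with $|\P|=q^2+q+1$. Take a node $X$ with $|X|>2kq$. Set $\gamma=\lfloor |X|/(2q)\rfloor$, so that $2\gamma q\le |X|<2(\gamma+1)q$. Since $|X|>2kq$, we have $\gamma\ge k$, and the hypothesis $k\le \min\{\gamma,\frac{\log q}{20\log\log q}\}$ of Lemma~\ref{lem:k-arc-one-setp-HCL} holds. The children of $X$ are the members of the family $\C_X$ given by that lemma. A node $X$ with $|X|\le 2kq$ is declared a leaf.

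We now check the three properties.

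Property (a). The root contains every $k$-arc. If a $k$-arc $Y$ lies in an internal node $X$, then by (a) of Lemma~\ref{lem:k-arc-one-setp-HCL} it lies in some child of $X$. By induction down the tree, every $k$-arc lies in some leaf.

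Property (c). This holds by the stopping rule. One caveat: a child can be much smaller than its parent, but that is harmless, since a node with $|X|\le 2kq$ simply becomes a leaf.

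Property (b). This requires bounding the depth of the tree. Each step shrinks the node by a factor of at least $1-\frac{1}{10^5k^5}$. Starting from $|\P|\le 2q^2$ and stopping once the size drops below $2kq$, the depth $D$ satisfies
$$
D\le 10^5k^5\log\l(\frac{2q^2}{2kq}\r)+1\le 10^5k^5\log q+1.
$$
Each internal node has at most $\exp\l(10^7k^6\log q\cdot q^{1-\frac1k}\r)$ children. Hence the number of leaves is at most the product of these bounds along a root-to-leaf path:
$$
\exp\l(D\cdot 10^7k^6\log q\cdot q^{1-\frac1k}\r)\le \exp\l(10^{12}k^{11}(\log q)^2q^{1-\frac1k}\r).
$$
Here the extra $+1$ in $D$ is absorbed, using $10^5k^5\log q+1\le 10^5k^5\log q\cdot(1+o(1))$ together with the slack in the constants, or alternatively by applying the lemma's bound more carefully. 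We set $\C$ to be the set of leaves. Leaves at smaller depth only make the count smaller, so the bound on $|\C|$ holds.

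The main obstacle is making sure the parameter range stays valid throughout the iteration. Specifically, $\gamma$ must be a positive integer with $\gamma\ge k$ at every internal node. Our choice of $\gamma$ and the leaf criterion $|X|\le 2kq$ guarantee this. The other point needing care is the constant bookkeeping in the depth bound, which should be routine.
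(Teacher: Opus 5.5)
Your proposal is correct and is essentially the paper's proof. It uses the same container tree built from Lemma~\ref{lem:k-arc-one-setp-HCL} with $\gamma=\lfloor |X|/(2q)\rfloor$, the same stopping rule $|X|\le 2kq$, and the same leaf count (the per-node bound on $|\C_X|$ raised to the power of the depth); the $+1$ in your depth bound needs no slack in the constants (there is none, since $10^7\cdot 10^5=10^{12}$), because the term you discarded absorbs it: $10^5k^5\log(q/k)+1\le 10^5k^5\log q$ as $10^5k^5\log k\ge 1$ for $k\ge 2$.
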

\begin{proof}
Construct a rooted tree recursively starting with the root being $\P$. Whenever there is a node $X$ in the current tree with $|X|>2kq$, apply Lemma~\ref{lem:k-arc-one-setp-HCL} on $X$ with $\gamma=\l\lfloor\frac{|X|}{2q}\r\rfloor$ to obtain a collection $\C_X$ such that
\begin{itemize}
    \item[($a'$)] for every $k$-arc $Y$ in $X$ there exists $C\in\C_X$ such that $Y\subseteq C$;
    \item[($b'$)] $|\C_x|\le \exp\l(10^{7}k^{6}\log q \cdot q^{1-\frac{1}{k}}\r)$;
    \item [($c'$)] for every $C\in\C_x$, $|C|\le (1-\frac{1}{10^5k^5})|X|\le \exp(-\frac{1}{10^5k^5})|X|$.
\end{itemize}
Call the tree we obtain in the end the container tree and let $\C$ be the collection of its leaf nodes. By the stopping condition, we know that $|C|\le 2kq$ for every $C\in\C$, confirming $(c)$. By $(a')$, every $k$-arc of $PG(2,q)$ is contained in some $C\in\C$, confirming $(a)$. 

It remains to upper bound $|\C|$. By $(c')$ and the stopping condition, the depth of the tree is at most $10^5k^5\log q$. This together with $(b')$ implies that 
$$
|\C|\le \exp\l(10^{7}k^{6}\log q \cdot q^{1-\frac{1}{k}}\cdot (10^5k^5\log q)\r)=\exp(10^{12}k^{11}(\log q)^2q^{1-\frac{1}{k}}),
$$
confirming $(b)$.
\end{proof}

\subsection{Random subset of a small set can hardly be a transversal}
\begin{lem}\label{lem:transversal probability}
Let $k$ be a positive integer such that $k\le \log q/10$. Let $C\subseteq \P$ with $|C|\le 2kq$. Then
$$
\Pr[C\cap R \text{ is a transversal}]\le \exp\l(-\frac{q}{50k}\r).
$$
\end{lem}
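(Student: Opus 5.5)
The plan is to pass to the complement of $R$ so that Theorem~\ref{thm:janson} applies. Let $Y=\P\setminus R$; each point lies in $Y$ independently with probability $1/2$. For each line $L\in\L$ put $A_L=L\cap C$ and $c_L=|A_L|$. The set $C\cap R$ is a transversal exactly when no event $B_L=\{A_L\subseteq Y\}$ occurs, so Theorem~\ref{thm:janson} bounds $\Pr\l[\bigwedge_L\overline{B_L}\r]$. Lines with $c_L=0$ are fine: if such a line exists, the probability is $0$. So I assume $c_L\ge 1$ for every $L$.

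First I count incidences. We have $\sum_L c_L=|C|(q+1)\le 2kq(q+1)$, so the average of $c_L$ is at most about $2k$. By Markov, at least half of the lines satisfy $c_L\le 4k+1$; let $\L'$ be this set and apply Janson to $\{A_L\}_{L\in\L'}$ only. Then
$$\mu=\sum_{L\in\L'}2^{-c_L}\ge \tfrac{q^2}{2}\,2^{-4k-1},$$
which is at least roughly $q^{2-0.3}$ because $k\le \log q/10$.

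For $\Delta$, two lines $L\sim L'$ are dependent exactly when their intersection point $x$ lies in $C$. Grouping ordered pairs by $x$ and writing $S_x=\sum_{L\ni x,\,L\in\L'}2^{-c_L}$ gives
$$\Delta\le 2\sum_{x\in C}S_x^2 .$$
Using $S_x\le (q+1)/2$ (since $c_L\ge 1$) and $\sum_x S_x=\sum_{L\in\L'}c_L2^{-c_L}\le (4k+1)\mu$, I get $\Delta\le (4k+1)(q+1)\mu$. If $\Delta<\mu$, inequality~\eqref{eq:Janson1} gives $\exp(-\mu/2)$, which is far smaller than needed. Otherwise inequality~\eqref{eq:Janson2} gives $\exp\l(-\mu/(2(4k+1)(q+1))\r)$.

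The main obstacle is this final comparison. The bound above yields an exponent of order $q\,2^{-4k}/k$, while the statement asks for $q/(50k)$. The $2^{-4k}$ loss comes from both $\mu$ and the crude estimate $S_x\le (q+1)/2$. I would try two fixes.

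\emph{Sharper $\Delta$.} Bound $S_x$ more carefully: most lines through a point of $C$ carry many points of $C$ unless $C$ is concentrated. I would split according to whether $c_L\le k$ or not and use $\sum_{L\ni x}(c_L-1)\le |C|-1$ for each $x$.

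\emph{Independent alternative.} Fix a point $x\notin C$, which exists since $|C|\le 2kq<q^2$. The $q+1$ lines through $x$ have pairwise disjoint $C$-parts, so the events $B_L$ for these lines are independent. Hence
$$\Pr\le\prod_{L\ni x}\l(1-2^{-c_L}\r)\le \exp\Bigl(-\sum_{L\ni x}2^{-c_L}\Bigr),$$
and by convexity $\sum_{L\ni x}2^{-c_L}\ge (q+1)2^{-|C|/(q+1)}$.

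I would try the second route first, since it removes the dependency issue entirely and isolates the exact exponent obtainable. I would then check whether the intended reading of $\log$ together with the constraint on $k$ makes the $2^{-2k}$ factor harmless against the $1/(50k)$ in the claimed bound. If it does not, I would average over the choice of $x$, or fall back to the sharper Janson estimate.
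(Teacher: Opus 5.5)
There is a genuine gap. Your setup (complement $Y$, events $B_L$, restricting to the at least $q^2/2$ lines with $c_L\lesssim 4k$, dependent pairs meeting in a point of $C$) is the right framework, but it fails at the quantitative step, and neither proposed fix closes it.

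Your bound $\Delta\le(4k+1)(q+1)\mu$ is only linear in $\mu$. So \eqref{eq:Janson2} yields an exponent $\mu/(2(4k+1)(q+1))\approx q2^{-4k}/(32k)$, which falls short of $q/(50k)$ for every $k$.

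The independent-lines route is capped at $\exp\bigl(-(q+1)2^{-2k}\bigr)$, and this cap is intrinsic. Take $C$ to be a union of $2k$ lines in general position, so $|C|\le 2kq$. For every $x\notin C$, all but at most $\binom{2k}{2}$ lines through $x$ have $c_L=2k$, so averaging over $x$ does not help. Since $2^{2k}>50k$ already for $k\ge 4$, and $2^{-2k}$ is a negative power of $q$ when $k$ is near $\log q/10$, no reading of $\log$ makes this factor harmless. It would also break the application. With $k\approx\log q/(40\log\log q)$ one has $q2^{-2k}=q\,e^{-\Theta(\log q/\log\log q)}$. This is far below the bound on $\log|\C|$, which is $q/\mathrm{polylog}(q)$, so the union bound in the proof of Theorem~\ref{thm:main} would not go through.

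The missing idea is to make all events equiprobable before applying Janson, so that $\Delta$ scales like $\mu^2$. In your unpadded version the joint probabilities $2^{1-c_L-c_{L'}}$ can be as large as $1/2$ for lines with small $c_L$. That is exactly why you could only bound $S_x$ by $(q+1)/2$ and were left with a $\Delta$ linear in $\mu$.

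The paper's construction is as follows.
\begin{itemize}
\item Take $\L_S=\{L:|L\cap C|<4k\}$, which has at least $q^2/2$ lines by the same incidence count you did.
\item Pad each $L\cap C$ with $4k-|L\cap C|$ fresh auxiliary points, distinct for different lines, each put in $Y$ independently with probability $1/2$.
\item Let $B_L$ be the event that the padded $4k$-set $\hat L$ lies in $Y$.
\end{itemize}
Padding only shrinks $B_L$, so transversality of $C\cap R$ still forces $\bigwedge_{L}\overline{B_L}$. Now $\Pr[B_L]=2^{-4k}$ for every $L$. Also $\Pr[B_L\wedge B_{L'}]=2^{1-8k}$ for each ordered pair of distinct lines with $L\cap L'\in C$, and there are at most $|C|(q+1)q\le 2k(q+1)q^2$ such pairs.

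Thus $\mu\ge q^22^{-4k-1}$ and $\Delta\le k(q+1)q^22^{2-8k}$, and the factors $2^{-8k}$ cancel. If $\Delta\ge\mu$, \eqref{eq:Janson2} gives $\exp\bigl(-q^2/(32k(q+1))\bigr)\le\exp(-q/(50k))$. If $\Delta<\mu$, \eqref{eq:Janson1} gives $\exp(-q^22^{-4k-2})$, which is small enough because $k\le\log q/10$.

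Your vague ``sharper $\Delta$'' idea would have to overcome precisely the large joint probabilities of lines with small $c_L$, and the padding trick does this in one line.
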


\begin{proof}
Let $\L_S\subseteq\L$ be the collection of lines $L$ such that $|C\cap L|< 4k$, and let $\overline{\L_S}=\L\setminus\L_S$. Since every point is contained in $q+1$ lines, the number of incidences $(x,L)$ with $x\in C$ is $|C|(q+1)\le 2kq(q+1)$. Every line in $\overline{\L_S}$ contributes at least $4k$ incidences. Thus, $|\overline{\L_S}|\le \frac{ 2kq(q+1)}{4k}\le \frac{q(q+1)}{2}$, and hence $|\L_S|\ge q^2+q+1-|\overline{\L_S}|\ge q^2/2$.

For each line $L\in \L_S$, add $4k-|L\cap C|$ new points into $L\cap C$ to produce a new line $\hat{L}$ with $|\hat{L}|=4k$. Let $\hat{L_S}:=\{\hat{L}~|~L\in\L_S\}$ and let $\hat R$ be a random subset of the set of newly added points $\cup_{L\in\L_S}\hat L\setminus L$ obtained by selecting each point randomly and independently with probability 1/2. Let $\Omega:=\cup_{\hat{\L_S}}\hat L$, and let $Y=\Omega\setminus(R\cup \hat R)$; thus $Y$ is a random subset of $\Omega$ obtained by selecting each point randomly and independently with probability 1/2. For each $L\in \L_S$, let $B_L$ be the event that $\hat L\subseteq Y$. Let
$$
\mu=\sum_{L\in\L_S}\Pr[B_L]=|\L_S|2^{-4k}\ge q^22^{-4k-1}.
$$
For $L, L'\in \L_S$, write $L\sim L'$ if $L\cap L'\in C$. Note that each point in $C$ is contained in at most $q+1$ lines in $\L_S$. Thus the number of ordered pairs $(L,L')$ such that $L\sim L'$ is at most $|C|(q+1)q\le 2k(q+1)q^2$. Let
$$
\Delta=\sum_{L\sim L'}\Pr[B_L\wedge B_{L'}]\le k(q+1)q^22^{-8k+2}.
$$

The proof divides into two cases.

\noindent\textbf{Case 1: If $\Delta\ge \mu$}, then by \eqref{eq:Janson2},
$$
\Pr[\bigwedge_{L\in\L_S}\overline{B_L}]\le \exp(-\frac{\mu^2}{2\Delta})\le \exp(-\frac{q^2}{32k(q+1)})\le \exp(-\frac{q}{50k}).
$$

\noindent\textbf{Case 2: If $\Delta<\mu$}, then by \eqref{eq:Janson1},
$$
\Pr[\bigwedge_{L\in\L_S}\overline{B_L}]\le \exp(-\mu+\frac{\Delta}{2})\le \exp(-\frac{\mu}{2})\le \exp(-q^22^{-4k-2})\le \exp(-\frac{q}{50k}),
$$ 
where the last inequality comes from the given condition $k\le \log q/10$.

Note that if $C\cap R$ is a transversal, then none of $B_L$ happens. Therefore, $\Pr[C\cap R \text{ is a transversal}]\le \Pr[\bigwedge_{L\in\L_S}\overline{B_L}]\le \exp(-\frac{q}{50k})$ as desired in either case.
\end{proof}

\subsection{Proof of Theorem~\ref{thm:main}}
\begin{proof}
For any integer $n_0$, pick the prime power $q$ such that $0.49q^2\ge n_0$. Recall that $\P$ is the set of all points and $\L$ is the set of all lines of $PG(2,q)$. Further, $R$ is a random subset of $\P$ obtained by picking every point randomly and independently with probability 1/2. Note that $R$ together with the collection $\{R\cap L\}_{L\in\L}$ form a block design. By the Chernoff bound and the union bound, with high probability, $0.51q^2\ge n=|R|\ge 0.49q^2\ge n_0$ and, for every $L\in\L$, $|R\cap L|\ge 0.49q\ge  0.6\sqrt{n}$.

Let $k=\l\lceil\frac{\log q}{40\log\log q}\r\rceil$. Note that $k\ge \frac{\log n}{100\log\log n}$ with high probability. By Lemma~\ref{lem:k-arc-HCL}, there exists a collection $\C$ of subsets of $\P$ such that
\begin{itemize}
    \item[(a)] for every $k$-arc $Y$ of $PG(2,q)$ there exists $C\in\C$ such that $Y\subseteq C$;
    \item[(b)] $|\C|\le \exp\l(10^{12}k^{11}(\log q)^2q^{1-\frac{1}{k}}\r)$;
    \item [(c)]for every $C\in\C$, $|C|\le 2kq$.
\end{itemize}

By $(a)$, for every $k$-arc $Y$ in $R$ there exists a $C\in\C$ such that $Y\subseteq C\cap R$. By $(c)$ and Lemma~\ref{lem:transversal probability},
$$
\Pr[C\cap R \text{ is a transversal}]\le \exp\l(-\frac{q}{50k}\r).
$$
Note that if $Y\subset C\cap R$ and $C\cap R$ is not a transversal, then $Y$ is not a transversal.
Thus, by $(b)$ and the union bound, 
$$
\begin{aligned}
\Pr[\text{there is a $k$-arc in $R$ which is a transversal}]&\le |\C|\exp\l(-\frac{q}{50k}\r)\\
&\le \exp\l(10^{12}k^{11}(\log q)^2q^{1-\frac{1}{k}}-\frac{q}{50k}\r)\\
&\le \exp\l(-\frac{q}{100k}\r)\rightarrow0~~~~\text{as }q\rightarrow \infty.
\end{aligned}
$$
There exists a deterministic $R$ such that $n=|R|\ge n_0$, $|R\cap L|\ge 0.6 \sqrt n$ for every $L\in \L$, and every $k$-arc in $R$ is not a transversal where $k\ge \frac{\log n}{100\log\log n}$.
\end{proof}

\section{Concluding Remarks}
How tight is the $\frac{\log n}{\log\log n}$ lower bound in Theorem~\ref{thm:main}? Indeed, for any block design of size $n$ with every block of size $\Theta(\sqrt{n})$, sampling a random subset with probability $\Omega(\frac{\log n}{\sqrt{n}})$ would produce a transversal intersecting every line in at most $O(\log n)$ points. Therefore, the lower bound in Theorem~\ref{thm:main} is tight up to a $\Theta(\log\log n)$ factor. We conjecture that the $O(\log n)$ upper bound is closer to the truth.

\begin{conj}\label{conj:log}
There exist constants $c>0$ such that the following holds. For any integer $n_0$ there exists $n\ge n_0$, and a block design $(S; A_1,\dots, A_m)$ such that $|S|=n$, $|A_i|>c\sqrt{n}$ for every $i$ and
for every transversal $Q$ there exists an $i$ such that
$$
|Q\cap A_i|\ge c\log n.
$$
\end{conj}

Alon~\cite{alon2026problems} proposed the following conjecture as an alternative route to provide a negative answer to Problem~\ref{prob:main}.
\begin{conj}[Conjecture 4.7 in~\cite{alon2026problems}]
Let $R$ be a random subset of $\P$, the set of points of $PG(2,q)$, obtained by selecting each point randomly and independently with probability $1/2$. Then there exists a function $f$ with $f(q)\rightarrow\infty$ as $q\rightarrow\infty$ such that with high probability $\frac{|B|}{q}\ge f(q)$ for every transversal $B$ of $R$. In fact, this may even be true with $f(q)=\Omega(\log q)$.
\end{conj}
This conjecture remains open. Further, the stronger assertion $f(q)=\Omega(\log q)$ would imply Conjecure~\ref{conj:log}.

\section*{Acknowledgment}
The author thanks Jozsef Balogh for bringing this problem to his attention at IASM in Hangzhou in the summer of 2025.

\section*{AI Declaration}
The author used ChatGPT 6 Astra to obtain a preliminary proof of Lemma~\ref{lem:transversal probability}, which was later simplified and rewritten by the author. The author also used ChatGPT 6 Astra to proofread and polished the manuscript.

\bibliographystyle{abbrv}
\bibliography{refs}

\end{document}